  \newlength{\defbaselineskip}
  \newcommand{\setlinespacing}[1]%
                               {\setlenght{\baselineskip}{#1 \defbaselineskip}}
  \newcommand{\re}{Re}
  \theoremstyle{plain}
  \newtheorem{thm}{Theorem}[section]
  \newtheorem{cor}[thm]{Corollary}
  \newtheorem{lem}[thm]{Lemma}
  \newtheorem{rem}[thm]{Remark}
  \theoremstyle{definition}
  \newtheorem{defi}[thm]{Definition}
  \numberwithin{equation}{section}
\begin{document}
\begin{center}
{\bf Best Approximation in Numerical Radius}
\end{center}
\vspace{.15 cm}
\begin{center}
\small{Asuman G\"{u}ven AKSOY  and Grzegorz LEWICKI}
\end{center}

\date{January 7, 2010}
\mbox{~~~}\\
\mbox{~~~}\\
\small\mbox{~~~~}{\bf Abstract.} {\footnotesize Let $X$ be a reflexive Banach space. In this paper we give a necessary and sufficient condition for an operator $T\in \mathcal{K}(X)$ to have the best approximation in numerical radius from the convex subset $\mathcal{U} \subset \mathcal{K}(X),$ where $\mathcal{K}(X)$ denotes the set of all linear, compact operators from $X$ into $X.$  We will also present  an application to minimal extensions with respect to the numerical radius. In particular some results on best approximation in norm will be generalized to the case of the numerical radius.} \\

\
\\
\footnotetext{{\bf Mathematics Subject Classification (2000):}
41A35, 41A65, 47A12, 47H10. \vskip1mm {\bf Key words: } Numerical index, Numerical
radius, Best approximation, Strongly unique best approximation, Minimal extensions .}

\section{ Introduction.}
Let $X$ be a Banach space over $\mathbb{R}$ or $\mathbb{C}$, we use $B_{X}$ for
the closed unit ball and $S_{X}$ for the unit sphere of $X$. The
dual space is denoted by $X^{*}$ and the Banach algebra of all
continuous linear operators on $X$ is denoted by $B(X)$. The
\textit{numerical range} of $T\in B(X)$ is defined by
$$W(T)= \{ x^{*}(Tx)  :~x\in S_{X},~x^{*}\in S_{X^{*}},~x^{*}(x)=1\}\cdot$$
The \textit{numerical radius} of $T$ is then given by
$$\parallel T \parallel_{w}=\sup\{\vert \lambda\vert : ~\lambda\in W(T)\}\cdot$$
Clearly, $\parallel . \parallel_{w}$ is a semi-norm on $B(X)$ and $\parallel T \parallel_{w} \le \Vert T\Vert$ for
all $T\in B(X)$.  Therefore, there are more operators for which $\Vert T\Vert\geq 1$ but $\parallel T \parallel_{w}= 1$ and this gives a motivation to study both the minimal projections and the  best approximation with respect to $\parallel \cdot  \parallel_{w} $. Naturally for $\Vert T\Vert$ we are taking ``the supremum" over $(x,x^*) \in B(X)\times B(X^*)$, but for $\parallel T \parallel_{w}$ ``the supremum" is taken over those $(x,x^*)$ for which $x^*(x) =1$.

 The \textit{numerical index} of $X$ is defined by
$$n(X)=\inf\{\parallel T \parallel_{w} :~ T\in S_{B(X)}\}\cdot$$
Equivalently, the numerical index $n(X)$ is the largest constant $k
\geq 0$ such that $$k\|T\| \leq \parallel T \parallel_{w}$$ for every $T \in B(X)$. Note
also that $0 \leq n(X) \leq 1$, and $n(X) > 0$ if and only if
$\parallel \cdot \parallel_{w}$
and $\|\cdot\|$ are equivalent norms.

The concept of numerical index was first introduced by Lumer
\cite{lg} in 1968. Since then much attention has been paid to this
constant of equivalence between the numerical radius and the usual
norm in the Banach algebra of all bounded linear operators of a
Banach space. Classical references here are  \cite{bff-dj1},
 \cite{bff-dj2}. For recent results we refer the reader to
\cite{aag-cbl}, \cite{aga-ed-kham}, \cite{ee},
\cite{fc-mm-pr},\cite{gke-rdkm}, \cite{lg-mm-pr}, \cite{mm}.

Existence and uniqueness of best approximation in particular subsets of $ \mathcal{U}\subset B(X)$ in the operator norm is one of the basic questions of approximation theory. One
very important case of $\mathcal{U}$ is a set of all linear continuous projections from a Banach space $X$ onto its subspaces $Y.$ More precisely,
let $ Y \subset X$ be a linear, closed subspace. A linear map $P: X \rightarrow Y$ is called a \emph{projection} if $Py=y$ for any $y \in Y.$ Clearly, if $Y \neq \{ 0\}$, then $\| P \| \geq 1$ for any projection $P$. The set of all projections going from $X$ onto $Y$ will be denoted by $\mathcal{P}(X,Y)$.
 Minimal projections play a special role among all projections. A projection $P_o \in \mathcal{P}(X,Y)$ is called \emph{minimal} if
$$
\|P_o\| = \inf \{ \|P\|: P\in \mathcal{P}(X,Y)\} = dist(0,\mathcal{P}(X,Y))\}.
$$
There is a lot of previous research concerning minimal projections. Primarily this work addresses problems of finding minimal projections effectively and estimating  norms of minimal projections
and uniqueness of minimal projections. (e.g \cite{aag-cbl}, \cite{chalL1}-\cite{CMO}, \cite{FMW}, \cite{IS1}, \cite{K1}-\cite{LEP}, \cite{LES}, \cite{Li}, \cite{LZ1}, \cite{OL}, \cite{vpo}, \cite{SK1}-\cite{SK3}, \cite{WO1}).\\

Now suppose that $V$ is a subset of a Banach space $X$ and  $x_0 \in X \setminus V$. Denote by $P_V(x_o)$ the set of all best approximants to $x_0$ in  $V.$ We say $v_0 \in V$ is a \emph{strongly unique best approximation} (SUBA) to $x_0$ if and only if there exists $r > 0$ such that for all $u\in V$
$$
\| x_0-u \| \geq\| x_0-v_0\| + r \| u-v_0 \| .
$$
It is clear that if $v_0$ is a SUBA then $v_o \in P_V(x_o)$. It is also easy to see that $v_o$ is the only element of best approximation. There are natural examples of SUBA. Here we mention the most important one. Let $X=C[0,1]$ and $V_n$ be the subspace of polynomials of degree less than or equal to $n$. If $f$ is any element of $ C[0,1]$ and   $P_0 \in \mathcal{P}_{V_{n}}(f)$ then $P_0$ is a SUBA to $f$.

 Also strong unicity can be applied in the proof of the SUBA Theorem (\cite{ewc}, p.80 in the case of polynomial approximation) concerning the Lipschitz
continuity of the best approximation operator. More precisely, let $V \subset X $ and  $(x_n) \in X$ with $x_n \rightarrow x$. Suppose $P_{V}(x_n)$ is a best approximation to $x_n$ in $V$ and $x$ has a SUBA element $P_{V}(x)$ with the constant $ r>0.$ Then
$$ \| P_{V}(x_n)-P_{V}(x)\| \leq \displaystyle \frac{2}{r} \,\,  \| x_n -x \|.$$
Also, the strong unicity constant plays a crucial role in the estimate of the error of the Remez algorithm (see \cite{ewc}, p.97).
For further details concerning strong unicity we refer to \cite{brw}, \cite{ewc}, \cite{LE0}, \cite{LED}, \cite{SW}.

The aim of this paper is to prove some criteria for best approximation and SUBA with respect to the numerical radius and some related seminorms. More precisely,
let $X$ be a reflexive Banach space (we consider both the real and the complex cases) and let $ \mathcal{K}(X)$ denote the set of all compact oparators from $X$ into $X.$ Let us consider
\begin{equation}
\label{B}
 \mathcal{B} =B_{X^*} \times B_{X}
\end{equation}
with the Tychonoff topology induced by the
weak$^*$-topology in $B_{X^*}$ and by the weak-topology in $B_X.$ By the Banach-Alaoglu Theorem and the Tychonoff Theorem, $\mathcal{B}$ is a compact set. Assume that $W \subset \mathcal{B}$ is a fixed, non-empty and compact subset of $\mathcal{B}.$ Define for $ L \in \mathcal{K}(X)$
$$
\| L\|_W = \sup \{ |x^*(Lx)|:(x^*,x) \in W \}.
$$
It is clear that $\| \cdot \|_W$ is a semi-norm on $\mathcal{K}(X).$ Let
$$
\mathcal{W}(X) = \mathcal{K}(X)/(R),
$$
where $(R)$ is an equivalence relation on $\mathcal{K}(X) \times \mathcal{K}(X)$ defined by
$$
L\hbox{ } (R)\hbox{ }T \hbox{ if and only if } \|L-T\|_W = 0.
$$
Note that $\mathcal{W}$ becomes a Banach space with the norm
$$
\| [L]\|_W = \|L\|_W.
$$
Here the symbol $ [L]$ denotes the equivalence class of $L$ with respect to $(R).$

In this paper we prove some criteria for best approximation and SUBA  in the quotient space $ \mathcal{W}(X)$ where $X$ is a reflexive space. Also an application to minimal extensions with respect to the numerical radius will be presented. It is worth noticing that  \cite{aag-cbl} gives a characterization of minimal numerical-radius extensions of operators from a normed linear space $X$ onto its finite dimensional subspaces and
comparison with minimal operator-norm extension.

We will use the following results throughout the paper. Let $X$ be a Banach space and let $ext(S_{X^*})$ denote the set of all extreme points of $S_{X^*}.$ For any $x \in X$ set
$$
E(x)= \{ f \in ext(S_{X^*}):\,\, f(x)= \| x\| \}.
$$
We have $E(x) \neq \emptyset$ for any $ x \in X$, by the Banach-Alao\u{g}lu Theorem and the Krein-Milman Theorem.
\begin{thm}
\label{bros}
\cite{brw} Let $V \subset X$ be a convex set and let $x_o \in X.$ Then $v_o \in V$ is a best approximation to $x_o $ in $V$ if and only if for any $ v \in V$ there exists
$f \in E(x-v_o)$ with
$$
re(f(v-v_o))\leq 0.
$$
If $V$ is a linear subspace, the above inequality can be replaced by
$$
re(f(v))\leq 0.
$$
Here for $ z \in \mathbb{C},$ the symbol $re(z)$ denotes the real part of $z.$
\end{thm}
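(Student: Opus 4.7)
The plan is to prove both implications separately, with necessity being the substantive direction. For sufficiency, given the Kolmogorov-type hypothesis and any $v\in V$, pick $f\in E(x_o-v_o)$ with $re(f(v-v_o))\le 0$ and compute
\[
\|x_o-v_o\|=re(f(x_o-v_o))=re(f(x_o-v))+re(f(v-v_o))\le re(f(x_o-v))\le\|x_o-v\|,
\]
so $v_o$ is a best approximation.

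For necessity, fix $v\in V$. Convexity of $V$ lets me form the segment $v_t=v_o+t(v-v_o)\in V$ for $t\in(0,1]$. Choose $f_t\in S_{X^*}$ by Hahn--Banach with $f_t(x_o-v_t)=\|x_o-v_t\|$; expanding $x_o-v_t=(x_o-v_o)-t(v-v_o)$ and taking real parts yields
\[
\|x_o-v_t\|=re(f_t(x_o-v_o))-t\,re(f_t(v-v_o)).
\]
Combining with $\|x_o-v_t\|\ge\|x_o-v_o\|\ge re(f_t(x_o-v_o))$ forces $re(f_t(v-v_o))\le 0$ for every $t\in(0,1]$. Banach--Alaoglu then supplies a weak$^*$-cluster point $f_0$ of $(f_t)$ as $t\to 0^+$; norm-continuity of $t\mapsto\|x_o-v_t\|$ together with uniform boundedness of $(f_t)$ force $f_0$ to norm $x_o-v_o$, and the inequality $re(f_t(v-v_o))\le 0$ passes to the limit.

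The crucial remaining step—and the main obstacle—is upgrading $f_0$ to an \emph{extreme} norming functional. Set
\[
F_0=\{f\in B_{X^*}:f(x_o-v_o)=\|x_o-v_o\|\},
\]
a weak$^*$-compact convex face of $B_{X^*}$, and consider the weak$^*$-continuous affine functional $\phi(f)=re(f(v-v_o))$. Its minimum $m$ on $F_0$ satisfies $m\le re(f_0(v-v_o))\le 0$, and
\[
F_1=\{f\in F_0:\phi(f)=m\}
\]
is a face of $F_0$: if $\phi((g+h)/2)=m$ with $g,h\in F_0$, then $\phi(g),\phi(h)\ge m$ force $\phi(g)=\phi(h)=m$. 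Being a face of a face, $F_1$ is a face of $B_{X^*}$. By Krein--Milman $F_1$ has an extreme point $f^*$, which, since $F_1$ is a face of $B_{X^*}$, must be extreme in $B_{X^*}$ and hence in $S_{X^*}$. By construction $f^*\in F_0$ with $\phi(f^*)=m\le 0$, so $f^*\in E(x_o-v_o)$ delivers the required inequality.

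The linear-subspace refinement is a one-line reduction: apply the convex case to $v+v_o\in V$ (legitimate because $V$ is a subspace), which rewrites $re(f(v-v_o))\le 0$ as $re(f(v))\le 0$.
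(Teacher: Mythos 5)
The paper contains no proof of this theorem at all: it is imported as a known tool from Brosowski--Wegmann \cite{brw}, so there is nothing internal to compare your argument against; I can only judge it on its own terms, and it is correct. Your sufficiency computation is the standard one (and, as it shows, that direction never uses extremality of $f$). For necessity, the two-stage structure is sound and is essentially the classical route: the segment $v_t=v_o+t(v-v_o)$, the norming functionals $f_t$, and a weak$^*$-cluster point $f_0$ produce a (not necessarily extreme) functional with $f_0(x_o-v_o)=\|x_o-v_o\|$ and $re(f_0(v-v_o))\leq 0$ --- this is the usual proof of the Kolmogorov criterion for convex sets --- and the upgrade to an extreme functional via the nested faces $F_1\subset F_0\subset B_{X^*}$, transitivity of faces, and Krein--Milman is exactly how one lands in $E(x_o-v_o)$ (note the statement's ``$E(x-v_o)$'' is a typo for $E(x_o-v_o)$, which you silently and correctly fixed). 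Two cosmetic points, neither affecting correctness: the face property should be checked for arbitrary convex combinations $\lambda g+(1-\lambda)h$, $\lambda\in(0,1)$, not only midpoints, though the same one-line argument works because $\phi$ is affine; and in the complex case, showing $F_0$ is a face needs the extra half-sentence that $re(g(x_o-v_o))=\|x_o-v_o\|$ combined with $|g(x_o-v_o)|\leq\|x_o-v_o\|$ forces $g(x_o-v_o)=\|x_o-v_o\|$ exactly, killing the imaginary part.
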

\begin{thm}
\label{wojcik}
\cite{SW}
 Let $V \subset X$ be a convex set and let $x_o \in X.$ Then $v_o \in V$ is a SUBA to $x_o $ in $V$ with $ r > 0$ if and only if for any $ v \in V$ there exists
$f \in E(x-v_o)$ with the following:
$$
re(f(v-v_o))\leq -r\|v-v_o\|
$$
If $V$ is a linear subspace the above inequality can be replaced by
$$
re(f(v))\leq -r\|v\|.
$$
\end{thm}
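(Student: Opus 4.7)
The plan is to establish the two implications separately. The sufficient direction reduces to a short supporting-functional computation, while the necessary direction requires a Hahn-Banach choice along the segment from $v_o$ to $v$, a weak-$*$ compactness limit, and a Krein-Milman selection to land inside the extreme set $E(x_o - v_o)$.

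For $(\Leftarrow)$, fix $v \in V$ and pick the promised $f \in E(x_o - v_o)$ with $re(f(v - v_o)) \leq -r\|v - v_o\|$. Then
$$
\|x_o - v\| \geq re(f(x_o - v)) = f(x_o - v_o) - re(f(v - v_o)) \geq \|x_o - v_o\| + r\|v - v_o\|,
$$
which is exactly the SUBA inequality.

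For $(\Rightarrow)$, fix $v \in V$ with $v \neq v_o$. By convexity, $v_t := v_o + t(v - v_o) \in V$ for $t \in (0,1]$, and the SUBA hypothesis gives $\|x_o - v_t\| \geq \|x_o - v_o\| + rt\|v - v_o\|$. By Hahn-Banach pick $f_t \in S_{X^*}$ with $f_t(x_o - v_t) = \|x_o - v_t\|$. Taking real parts in the identity $f_t(x_o - v_t) = f_t(x_o - v_o) - t f_t(v - v_o)$ and using $re(f_t(x_o - v_o)) \leq \|x_o - v_o\|$ forces $re(f_t(v - v_o)) \leq -r\|v - v_o\|$ uniformly in $t$. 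By Banach-Alao\u{g}lu, a subnet of $(f_t)$ has a weak-$*$ cluster point $f \in B_{X^*}$ as $t \to 0^+$. Since $v_t \to v_o$ in norm and $\|x_o - v_t\| \to \|x_o - v_o\|$, a routine continuity argument yields $f(x_o - v_o) = \|x_o - v_o\|$, so $f \in S_{X^*}$ supports $x_o - v_o$, and weak-$*$ convergence preserves $re(f(v - v_o)) \leq -r\|v - v_o\|$.

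The main obstacle is that this $f$ need not be an extreme point of $S_{X^*}$, whereas the conclusion demands $f \in E(x_o - v_o) \subseteq ext(S_{X^*})$. To cure this, consider the face
$$
F := \{ g \in S_{X^*} : g(x_o - v_o) = \|x_o - v_o\| \},
$$
which is weak-$*$ compact, convex, and a face of $B_{X^*}$; hence $ext(F) \subseteq ext(B_{X^*})$, so $ext(F) \subseteq E(x_o - v_o)$. The real-linear functional $g \mapsto re(g(v - v_o))$ is weak-$*$ continuous on $F$, and by the Bauer maximum principle applied to its negative it attains its minimum on $F$ at some $g^\star \in ext(F) \subseteq E(x_o - v_o)$. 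Since $f \in F$, we obtain $re(g^\star(v - v_o)) \leq re(f(v - v_o)) \leq -r\|v - v_o\|$, completing the argument. The linear-subspace reformulation is immediate from the bijection $v \leftrightarrow v + v_o$ on $V$, valid because $v_o \in V$.
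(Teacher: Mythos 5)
You cannot be checked against the paper's own proof here, because the paper gives none: Theorem \ref{wojcik} is stated as a known result cited to Sudolski and W\'ojcik \cite{SW}, exactly as Theorem \ref{bros} is cited to \cite{brw}, and both are then used as black boxes in Section 2. Judged on its own, your proof is correct and self-contained. The sufficiency half is the standard one-line support-functional estimate. The necessity half assembles the right three ingredients: the segment trick $v_t = v_o + t(v-v_o)$, which gives the bound $re(f_t(v-v_o)) \leq -r\|v-v_o\|$ uniformly in $t$ for norming functionals $f_t$ of $x_o - v_t$; a weak-$*$ cluster point $f$ as $t \to 0^+$, which norms $x_o - v_o$ because $\|x_o - v_t\| \to \|x_o - v_o\|$ and $t f_t(v-v_o) \to 0$, and which inherits the bound in the limit; and the passage from $f$ to an extreme point via the weak-$*$ compact exposed face $F$ and the Bauer maximum principle. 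That last step is genuinely necessary (the cluster point need not be extreme, and the conclusion demands membership in $E(x_o - v_o)$), and you handle it correctly; equivalently one could apply Krein--Milman to the set of minimizers of $g \mapsto re(g(v-v_o))$ on $F$, which is a further weak-$*$ compact face of $F$ --- the same idea in different packaging. In substance this is the classical Brosowski--Wegmann-type argument adapted to strong unicity, so it is very likely the same route taken in \cite{SW}.

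One small caveat: if $x_o = v_o$ (i.e., $x_o \in V$), your set $F$ is all of $S_{X^*}$, which is neither convex nor a face, so the Bauer step as written does not apply. The degenerate case is easy to patch: SUBA with $v_o = x_o \in V$ forces $r \leq 1$ (apply the definition to any $u \in V$, $u \neq v_o$), and then any extreme functional $f$ norming $v_o - v$ lies in $E(0) = ext(S_{X^*})$ and satisfies $re(f(v-v_o)) = -\|v-v_o\| \leq -r\|v-v_o\|$. With that one line added, your argument covers all cases.
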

\section{Main Results }
In the complex case define for any $ \theta \in [0,2\pi]$
$$
W_{\theta} = \{ (e^{i\theta}x^*,x): (x^*,x) \in W \}
$$
and
\begin{equation}
\label{Z}
Z = \bigcup_{\theta \in [0,2\pi]}W_{\theta}.
\end{equation}
Set for any $ T \in \mathcal{W}(X)$
\begin{equation}
\label{set}
W_T= \{ (x^*,x) \in Z :\,\,\, x^*(Tx) = \| [T]\|_{W} \,\,\}.
\end{equation}
Observe that the above definition does not depend on a particular representation of $[T].$
To define $W_T$ in the real case we should replace the set $Z$ by
\begin{equation}
\label{ZR}
Z_{\mathbb{R}} = W \cup \{(-x^*,x): (x^*,x) \in W\}.
\end{equation}
We start with
\begin{lem}
\label{nonempty}
For any $ T \in \mathcal{W},$ $W_T \neq \emptyset.$
\end{lem}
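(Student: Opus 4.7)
The plan is to show that the functional $\varphi_T : (x^*,x) \mapsto x^*(Tx)$ is continuous on $\mathcal{B}$ with the product topology defined in the excerpt, and then attains its maximum modulus on the compact set $W$; a rotation argument then places the maximizing pair inside $Z$ (respectively $Z_{\mathbb{R}}$).

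First I would fix a representative $T \in \mathcal{K}(X)$ of the class $[T]$ and check continuity of $\varphi_T$ on $\mathcal{B} = B_{X^*}\times B_X$. Suppose a net $(x^*_\alpha, x_\alpha) \to (x^*, x)$ in the Tychonoff topology, so $x^*_\alpha \to x^*$ weak$^*$ and $x_\alpha \to x$ weakly. Write
\begin{equation*}
x^*_\alpha(Tx_\alpha) - x^*(Tx) = x^*_\alpha\bigl(T(x_\alpha - x)\bigr) + (x^*_\alpha - x^*)(Tx).
\end{equation*}
Here is where the key hypotheses enter. Since $X$ is reflexive, $B_X$ is weakly compact and $x_\alpha \to x$ weakly; since $T$ is compact, $T$ sends the weak topology on bounded sets to the norm topology (this is the standard characterization of compact operators on reflexive spaces), hence $\|T(x_\alpha - x)\| \to 0$. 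Combined with $\|x^*_\alpha\| \le 1$, the first term vanishes in the limit. The second term vanishes by weak$^*$ convergence applied to the fixed vector $Tx \in X$. Thus $\varphi_T$ is continuous on $\mathcal{B}$, and its modulus $|\varphi_T|$ is a continuous real-valued function.

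Next I would restrict to $W$. Since $W$ is compact and $|\varphi_T|$ is continuous, there exists $(x^*_0, x_0) \in W$ with
\begin{equation*}
|x^*_0(Tx_0)| = \sup\{ |x^*(Tx)| : (x^*,x) \in W\} = \|[T]\|_W.
\end{equation*}
In the complex case, choose $\theta \in [0,2\pi]$ so that $e^{i\theta} x^*_0(Tx_0) = \|[T]\|_W$. Then $(e^{i\theta}x^*_0, x_0) \in W_\theta \subset Z$ and $(e^{i\theta}x^*_0)(Tx_0) = \|[T]\|_W$, which shows $(e^{i\theta}x^*_0, x_0) \in W_T$. In the real case, $x^*_0(Tx_0) = \pm\|[T]\|_W$, and either $(x^*_0, x_0)$ or $(-x^*_0, x_0)$ lies in $Z_{\mathbb{R}}$ and yields the required equality. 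Finally I would note that the definition of $W_T$ is independent of the chosen representative of $[T]$, since any two representatives differ by an element of seminorm zero and so produce the same value of $x^*(\cdot\,x)$ on any pair in $W \cup (-W)$ (this is essentially what makes the earlier observation in the text go through).

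The only nontrivial obstacle is establishing the joint continuity of $\varphi_T$ on the product topology; separate continuity is automatic, but joint continuity for a bilinear form on a product of two \emph{different} weak-type topologies genuinely requires the compactness of $T$ together with the reflexivity of $X$. Once continuity is in hand, compactness of $W$ and the elementary rotation/sign adjustment finish the argument.
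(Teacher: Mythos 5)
Your proof is correct and follows essentially the same route as the paper's: both establish continuity of $(x^*,x)\mapsto x^*(Tx)$ with respect to the product of the weak$^*$ and weak topologies by exploiting compactness of $T$ (the paper via a contradiction/subnet argument, you via the equivalent weak-to-norm continuity of compact operators on bounded sets), and then use compactness (of $Z$ in the paper, of $W$ in your version) to attain the supremum. Your explicit rotation/sign adjustment at the end is just a slightly more careful rendering of a step the paper leaves implicit in working with the rotation-invariant set $Z$, so the substance is the same.
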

\begin{proof}
Fix $ [T] \in \mathcal{W}(X).$
First we consider the complex case.
Since $W$ is a compact set, the set $Z$ defined by (\ref{Z}) is a compact set too.
Fix $ L \in [T].$ Define a function $ \Phi(L):Z \rightarrow \mathbb{C}$ by
$$
\Phi(L)((e^{i\theta}x^*,x))= e^{i\theta}(x^*Lx).
$$
Now we show that $\Phi(L)$ is a continuous function.

To do this, fix a net $\{z_{\gamma}=(x^*_{\gamma},x_{\gamma})\}\subset Z$ tending to $z=(x^*,x).$ Assume on the contrary that $ \Phi(L)(z_{\gamma})$ does not converge to
$\Phi(L)(z).$ Notice that $Lx_{\gamma} \rightarrow Lx$ in the weak topology.
Since $L$ is a compact operator, passing to a subnet, if necessary, we can assume that $ \|Lx_{\gamma} -Lx\|\rightarrow 0.$ Consequently,
$$
|x^*_{\gamma}Lx_{\gamma} - x^*Lx| \leq |x^*_{\gamma}(Lx_{\gamma}-Lx) + (x^*_{\gamma}-x^*)Lx|
$$
$$
\leq \|Lx_{\gamma}-Lx\| + |(z^*_{\gamma}-z^*)Lx| \rightarrow_{\gamma} 0.
$$
which is a contradiction. Since $\Phi(L)$ is continuous and $Z$ is a compact set there exists $ z_o \in Z$ such that
$$
\Phi(L)(z_o) =\sup\{|\phi(L)(z)|: z \in Z\},
$$
which shows that $ W_T \neq \emptyset,$ as required.
\newline
The proof in the real case goes exactly in the same way with $Z$ replaced by $Z_{\mathbb{R}}$ defined by (\ref{ZR}).
\end{proof}
\begin{thm}
\label{characterization}
Let $X$ be a reflexive Banach space and $W \subset \mathcal{B}$ be a fixed, non-empty, compact subset of $\mathcal{B}.$ Let $ \mathcal{U} \subset \mathcal{W}(X)$ be a non-empty  convex subset of $ \mathcal{W}(X). $ An element $L \in \mathcal{U}$ is a best approximation to $T \in \mathcal{W}(X)$ if and only if for any $U \in \mathcal{U}$ there exists $(x^*,x) \in W_T$ such that
\begin{equation}
\label{un}
re(x^*(U-L)x)\leq 0.
\end{equation}
\end{thm}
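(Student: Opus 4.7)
The plan is to bypass any abstract description of the extreme points of the dual ball of $\mathcal{W}(X)$ and work directly with the pairs in $Z$. The starting observation is that the identity $|\lambda| = \max_{\theta \in [0,2\pi]} re(e^{i\theta}\lambda)$ recasts the seminorm in the complex case as
$$\|L\|_W = \sup\{\, re(y^*(Ly)) : (y^*, y) \in Z \,\},$$
and Lemma \ref{nonempty} ensures this supremum is attained on $Z$ (with $Z_{\mathbb{R}}$ in place of $Z$ in the real case). With this reformulation in hand, the characterization becomes a concrete analogue of Theorem \ref{bros}, with the role of $E(T-L)$ played by $W_{T-L}$.

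For the sufficiency direction I would simply pick, for each $U \in \mathcal{U}$, an $(x^*, x) \in W_{T-L}$ satisfying $re(x^*((U-L)x)) \leq 0$, and compute
$$\|T-U\|_W \;\geq\; re(x^*((T-U)x)) \;=\; re(x^*((T-L)x)) - re(x^*((U-L)x)) \;\geq\; \|T-L\|_W,$$
so $L$ is a best approximation.

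For the necessity direction I would exploit the convexity of $\mathcal{U}$ through a perturbation argument. Fix $U \in \mathcal{U}$ and set $U_t = L + t(U-L) \in \mathcal{U}$ for $t \in (0,1]$. By optimality of $L$, $\|T-U_t\|_W \geq \|T-L\|_W$, and Lemma \ref{nonempty} supplies $(x_t^*, x_t) \in Z$ with $re(x_t^*((T-U_t)x_t)) = \|T-U_t\|_W$. Expanding $T-U_t = (T-L) - t(U-L)$ and bounding $re(x_t^*((T-L)x_t)) \leq \|T-L\|_W$ at once gives $re(x_t^*((U-L)x_t)) \leq 0$. The next move is to let $t \to 0^+$ and pass to a cluster point $(x^*, x) \in Z$ of the net $(x_t^*, x_t)$, which exists by compactness of $Z$.

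The main obstacle is verifying that this limit pair actually lies in $W_{T-L}$. Here the continuity argument from the proof of Lemma \ref{nonempty}, which promotes the weak convergence of $x_t$ to norm convergence of $(T-L)x_t$ and $(U-L)x_t$ via the compactness of these operators, will force both $re(x_t^*((T-L)x_t))$ and $re(x_t^*((U-L)x_t))$ to converge along a suitable subnet to the corresponding expressions at $(x^*, x)$. Combining these limits with $\|T-U_t\|_W \to \|T-L\|_W$ and the identity
$$re(x_t^*((T-L)x_t)) = \|T-U_t\|_W + t\cdot re(x_t^*((U-L)x_t))$$
forces $re(x^*((T-L)x)) = \|T-L\|_W$, so $(x^*, x) \in W_{T-L}$, while $re(x^*((U-L)x)) \leq 0$ is inherited in the limit. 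The real case is identical after replacing $Z$ by $Z_{\mathbb{R}}$.
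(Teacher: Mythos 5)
Your proof is correct, but it takes a genuinely different route from the paper's. The paper proceeds by isometric embedding: the map $\Phi:\mathcal{W}(X)\to C(Z)$ given by $\Phi([L])(x^*,x)=x^*Lx$ takes values in $C(Z)$ by the same compactness argument as in Lemma (\ref{nonempty}) and is a linear isometry, so the problem transfers to best approximation from the convex set $\Phi(\mathcal{U})$ inside $C(Z)$; there the paper invokes the Brosowski--Wegmann criterion (Theorem (\ref{bros})) together with the fact that the extreme points of the dual unit ball of $C(Z)$ are unimodular multiples of point evaluations, the rotations being absorbed into $Z$ itself, which produces exactly the peak points of $T-L$. You bypass both of these ingredients: your sufficiency direction is the standard one-line estimate, and your necessity direction is a self-contained Kolmogorov-type perturbation argument along $U_t=L+t(U-L)$, using compactness of $Z$ to extract a cluster point of the maximizing pairs $(x_t^*,x_t)$ and compactness of the underlying operators (the argument of Lemma (\ref{nonempty}) again) to pass to the limit in $re(x_t^*((T-L)x_t))$ and $re(x_t^*((U-L)x_t))$; the bookkeeping is in order, since optimality gives $\|T-L\|_W\le\|T-U_t\|_W\le\|T-L\|_W+t\|U-L\|_W$ and the term $t\,re(x_t^*((U-L)x_t))$ dies because $|re(x_t^*((U-L)x_t))|\le\|U-L\|_W$. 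What each approach buys: the paper's is modular, so replacing Theorem (\ref{bros}) by Theorem (\ref{wojcik}) immediately yields the SUBA criterion of Theorem (\ref{stronguni}) with no extra work, whereas your argument is elementary and avoids extreme-point theory and the two cited characterization theorems entirely, at the cost that the SUBA version would need a separate, quantitatively sharper perturbation argument.

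One further point in your favor: you consistently work with $W_{T-L}$, the peak set of $T-L$. This is what the sufficiency estimate requires and what the paper's own proof actually establishes (its conclusion reads $\Phi(T-L)(x^*,x)=\|\Phi(T-L)\|_{sup}$), so the symbol ``$W_T$'' in the statement of Theorem (\ref{characterization}) should indeed be read as the peak set of the difference; the discrepancy is invisible in the paper's application (Theorem (\ref{application})), where $L=0$ and the two sets coincide.
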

\begin{proof}
First we consider the complex case. Take $Z$ defined by (\ref{Z}).
Let $C(Z)$ denote the space of all continuous, complex-valued or real-valued functions defined on $Z$ equipped with the supremum norm $ \| \cdot \|_{sup}.$
Let $ \Phi:\mathcal{W}(X)\rightarrow C(Z)$ be defined by
$$
\Phi([L])(x^*,x)= x^*Lx
$$
for any $(x^*,x) \in Z.$
Reasoning as in Lemma (\ref{nonempty}) we can show, applying compactness of $L,$ that $\Phi{[L]}$ is a continuous function on $Z,$ where $Z$ is endowed with the topology induced from $ \mathcal{B}$ given by (\ref{B}). Moreover, $ \Phi$ is a linear
isometry. Consequently $L$ is a best approximation to $T$ in $\mathcal{U}$ if and only if $\Phi(L)$ is a best approximation to $\Phi(T)$ in $ \Phi(\mathcal{U}).$
By Theorem (\ref{bros}) and the form of extreme points of the unit sphere in $ C^*(Z),$ this is equivalent to the fact that for any $ \Phi(U) \in \Phi(\mathcal{U})$ there exist $(x^*,x) \in Z$ such that
$\Phi(T-L)(x^*,x)= \|\Phi(T-L)\|_{sup}$ and
$$
\re((\Phi(U)-\Phi(L))(x^*,x))= re(x^*(U-L)x) \leq 0,
$$
which completes the proof in the complex case. The proof in the real case goes in the same way with $Z$ replaced by $Z_{\mathbb{R}}$ given by (\ref{ZR}).
\end{proof}
Applying Theorem (\ref{wojcik}) and a similar reasoning used in Theorem (\ref{characterization}) we can prove:
\begin{thm}
\label{stronguni}
Let $X$ be a reflexive Banach space and $W \subset \mathcal{B}$ be a fixed, non-empty, compact subset of $\mathcal{B}.$ Let $ \mathcal{U} \subset \mathcal{W}(X)$ be a non-empty  convex subset of $ \mathcal{W}(X). $ An element $L \in \mathcal{U}$ is a SUBA to $T \in \mathcal{W}(X)$ with $r >0$ if and only if for any $U \in \mathcal{U}$ there exists $(x^*,x) \in W_T$ such that
$$
x^*(U-L)x \leq -r \| U - L \|_{w}
$$
\end{thm}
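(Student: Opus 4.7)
The plan is to mirror the proof of Theorem~\ref{characterization} verbatim, invoking the SUBA criterion (Theorem~\ref{wojcik}) in place of the best-approximation criterion (Theorem~\ref{bros}). The linear isometry $\Phi:\mathcal{W}(X)\to C(Z)$ sending $[L]\mapsto\bigl((x^*,x)\mapsto x^*Lx\bigr)$, already constructed in the proof of Theorem~\ref{characterization} (with $Z$ replaced by $Z_{\mathbb{R}}$ in the real case), trivially preserves the SUBA property together with its strong-unicity constant. Hence $L\in\mathcal{U}$ is a SUBA to $T$ with constant $r$ if and only if $\Phi(L)$ is a SUBA to $\Phi(T)$ in $\Phi(\mathcal{U})\subset C(Z)$ with the same constant $r$.

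I would then apply Theorem~\ref{wojcik} inside $C(Z)$: the criterion becomes the existence, for each $\Phi(U)\in\Phi(\mathcal{U})$, of $f\in E(\Phi(T)-\Phi(L))$ with $re(f(\Phi(U-L)))\leq -r\,\|\Phi(U-L)\|_{\sup}$. The extreme points of $S_{C^*(Z)}$ take the standard form $\lambda\delta_z$ with $|\lambda|=1$ and $z\in Z$ (in the real case $\pm\delta_z$). Writing $z=(e^{i\theta}x^*,x)$ with $(x^*,x)\in W$ and setting $y^*:=\lambda e^{i\theta}x^*$, the pair $(y^*,x)$ still lies in $Z=\bigcup_{\theta\in[0,2\pi]}W_{\theta}$, since $Z$ is closed under arbitrary phase rotations of its first coordinate. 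The norming condition $f\in E(\Phi(T)-\Phi(L))$ then reads $y^*(T-L)x=\|T-L\|_w$, placing $(y^*,x)$ in $W_{T-L}$, which is the set the statement labels $W_T$ (following the same notational shorthand already used in Theorem~\ref{characterization}). The inequality $re(f(\Phi(U-L)))\leq -r\,\|\Phi(U-L)\|_{\sup}$ then pulls back to $re(y^*(U-L)x)\leq -r\,\|U-L\|_w$, which is the asserted condition; the converse direction runs backwards along the same isometry $\Phi$.

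The one step I expect to require genuine care is the phase-absorption argument in the complex case: one must verify that every extreme functional $\lambda\delta_z$ attaining $\|\Phi(T-L)\|_{\sup}$ yields, after merging $\lambda$ with the phase already present in $z$, a legitimate element of $W_{T-L}$, and that the resulting inequality on $re(y^*(U-L)x)$ is genuinely equivalent to the original inequality on $re(f(\Phi(U-L)))$. In the real case this step is automatic, since $Z_{\mathbb{R}}=W\cup\{(-x^*,x):(x^*,x)\in W\}$ is manifestly closed under the only available sign change, so the extreme point $\pm\delta_z$ directly delivers an element of $Z_{\mathbb{R}}$ with the required norming property. Beyond this bookkeeping, Theorem~\ref{stronguni} is simply the image of Theorem~\ref{wojcik} under $\Phi$, so I expect no further analytic content.
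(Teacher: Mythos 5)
Your proposal is correct and follows essentially the same route as the paper: the paper's entire proof of Theorem~\ref{stronguni} is the remark that one applies Theorem~\ref{wojcik} together with the reasoning of Theorem~\ref{characterization} --- i.e., transport the problem to $C(Z)$ (or $C(Z_{\mathbb{R}})$) via the isometry $\Phi$ and use the form $\lambda\delta_z$ of the extreme points of the dual unit ball --- which is exactly what you carry out. Your two points of care, reading $W_T$ as the norming set of the error $T-L$ and retaining $re(\cdot)$ in the complex case, are the right resolutions of the paper's notational shorthand and are consistent with how the paper itself invokes the criterion (with $L=0$) in Theorem~\ref{application}.
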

\begin{rem}
\label{important1}
Note that in Theorem (\ref{characterization}) and Theorem (\ref{stronguni}) we can replace set $W_T$ by
$$
E_T = W_{T} \cap (ext(S_{X^*})\times ext({S_X})).
$$
\end{rem}
Indeed let $(x^*,x) \in W_T$ satisfy $ re(x^*(U-L)x) \leq 0.$ Set
$$
N_x = \{ z^* \in B_{X^*}:z^*(x) = \|x\| =1\}.
$$
It is clear that $N_x$ is a nonempty, convex set and that $ext(W_x) \subset ext(S_{X^*}).$ Consider a function
$$
g(w^*) = re(w^*(U-L)x).
$$
Since $g$ is a linear functional on $X^*$ and $ re(x^*(U-L)x) \leq 0,$ there exists $z^*$ in  $ ext(S_{X^*})= ext(W_x)$ with
$$
re(z^*(U-L)x) \leq 0.
$$
Now set$$
N_{z^*} = \{ x \in B_{X}:z^*(x) = \|z^*\| =1\}.
$$
Since $X$ is reflexive, by the James Theorem $N_{z^*} \neq \emptyset.$ Reasoning as we did above, we get that there exists $ z \in ext{S_X}$ satisfying $z^*(z)=1$ with
$$
re(z^*(U-L)z) \leq 0,
$$
which shows our claim in the case of Theorem (\ref{characterization}). The same reasoning works in the case of Theorem (\ref{stronguni}).

\begin{rem}
\label{important2}
Note that in Theorem (\ref{characterization}) and Theorem (\ref{stronguni}) we can replace $\mathcal{K}(X)$ by any subspcace $\mathcal{D}$ of
$\mathcal{K}(X).$ In this case the equivalence relation $(R)$ should be replaced by its restriction to $ \mathcal{D} \times \mathcal{D}.$
\end{rem}
\begin{cor}Assume that $X$ is a finite-dimensional space. For any number $q$ with $ 0 <q \leq 1,$  set
$$
W_q = \{ (x^*,x): x^*(x)=q\}.
$$
Also define ``q-numerical range" for $ T \in B(X)$ by
$$
W_q(T)= \{ x^*(Tx): \| x \|= \| x^* \|=1\,\, (x,x^*)=q \,\, \}
$$
and
$$
\| T \|_{W_q} = \sup \{|\lambda |:\lambda\in W_q(T)\}.
$$
Then the conclusion of Theorems (\ref{characterization}) and Theorem (\ref{stronguni}) remain true for the best approximation in $\mathcal{W}(X)$ with respect to
$\|\cdot\|_{W_q}$. If we put $q=1$ we get criteria for the best approximation with respect to the numerical radius.
\end{cor}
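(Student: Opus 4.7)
The plan is to realize the corollary as a direct application of Theorems \ref{characterization} and \ref{stronguni} with a specific choice of the set $W$. Concretely, I would take
$$
W := \{(x^*,x) \in S_{X^*} \times S_X : x^*(x) = q\} \subset \mathcal{B}
$$
and verify that this $W$ meets the hypotheses of those theorems, namely that it is a non-empty, compact subset of $\mathcal{B}$.

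For compactness, since $X$ is finite-dimensional, the weak-$*$ topology on $X^*$ and the weak topology on $X$ coincide with their respective norm topologies, so $\mathcal{B} = B_{X^*} \times B_X$ carries the ordinary product norm topology and $S_{X^*} \times S_X$ is already compact. The evaluation map $(x^*,x) \mapsto x^*(x)$ is jointly continuous, so the preimage of $\{q\}$ is closed, hence $W$ is compact. For non-emptiness I would fix $x \in S_X$, pick $y^* \in S_{X^*}$ with $y^*(x) = 1$ by Hahn--Banach, and then use the intermediate value theorem along a continuous path from $y^*$ to $-y^*$ in the arc-connected sphere $S_{X^*}$ (assuming $\dim X \geq 2$; the one-dimensional case is degenerate since all operators are scalars) to obtain $z^* \in S_{X^*}$ with $re(z^*(x)) = q$; in the complex case, a unit-modulus phase adjustment then yields $z^*(x) = q$ exactly.

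With this $W$, the semi-norm $\|L\|_W$ from the main theorems coincides with $\|L\|_{W_q}$ by direct comparison of the two definitions, and the quotient $\mathcal{K}(X)/(R) = B(X)/(R)$ agrees with the $\mathcal{W}(X)$ appearing in the corollary. Theorems \ref{characterization} and \ref{stronguni} now apply verbatim and yield the stated criteria for best approximation and for SUBA in $\mathcal{W}(X)$ with respect to $\|\cdot\|_{W_q}$. The case $q=1$ specialises $W$ to the set of pairs $(x^*,x) \in S_{X^*} \times S_X$ with $x^*(x)=1$, so $\|\cdot\|_W$ is then precisely the numerical radius, giving the announced criteria for best approximation and SUBA with respect to it.

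The only genuine obstacle is the (routine) compactness and non-emptiness check for $W$; once that is in place the corollary is immediate from the main theorems and requires no new analytic ingredient.
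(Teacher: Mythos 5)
Your proposal is correct and follows essentially the same route as the paper: the paper's proof likewise consists of observing that, because $X$ is finite-dimensional, $W_q$ is a compact (non-empty) subset of $\mathcal{B}$, and then invoking Theorems (\ref{characterization}) and (\ref{stronguni}) for the semi-norm $\|\cdot\|_{W_q}$. Your additional checks (coincidence of weak and norm topologies, continuity of evaluation, the Hahn--Banach/intermediate-value argument for non-emptiness, and $\mathcal{K}(X)=B(X)$) merely spell out details the paper leaves implicit.
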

\begin{proof}
Since $X$ is finite-dimensional, the set $W_q$ is a compact subset of $\mathcal{B}.$ Hence Theorem (\ref{characterization}) and Theorem (\ref{stronguni}) can be applied
to $\| \cdot \|_{W_q}.$
\end{proof}
 For more details on \emph{q-numerical range} we refer to \cite{ma}.
\begin{rem}
\label{rem1}
If $X$ is reflexive and $ W = \mathcal{B}=B_{X^*} \times B_X,$ Theorem (\ref{characterization}) and Theorem (\ref{stronguni}) have been proved in \cite{OL})(see also \cite{LED}).
\end{rem}
\section{An Application}
Investigating minimal projections in $ \mathcal{P}(X,V) \subset B(X)$ with respect to various semi-norms on $ B(X)$ raises the question of on what subspaces of
$ B(X)$ semi-norms are actually norms. The following lemma provides an answer to this question in the case of the numerical radius $ \| \cdot \|_w.$
\begin{lem}
\label{lem1}
Let $X$ be a Banach space, $V$ its $n$-dimensional subspace, and
$$
B_V (X,V) = \displaystyle \{ L \in B(X,V):\,\, L|_V = 0 \}.
$$
Let for $ A \in B(V),$
$$
\|A\|_w = \sup\{|v^*Av|:(v^*\in B_{V^*},V\in B_V, v^*(v)=1\}.
$$
Suppose $A \in B(V) \setminus \{0\}$ with $\| A \| _w > 0$ and $A_0 \in B(X,V)$ with $A_0|_V = A$  a fixed operator. Consider a subspace
$$
Z_A\subset B_V (X,V)
$$
defined by
$$ Z_A= span[A_0]\oplus B_V (X,V)$$ where by $span[ A_0]$ we mean the subspace spanned by $A_0$.  Then the semi-norm  $\| . \|_w$ defined with respect to the subspace $Z_A$ is actually a norm on $Z_A$.
\end{lem}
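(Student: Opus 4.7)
Any $L \in Z_A$ decomposes uniquely as $L = \alpha A_0 + B$ with $\alpha$ a scalar and $B \in B_V(X,V)$: uniqueness holds because $L|_V = \alpha A$ and $A \neq 0$ determine $\alpha$. Since $\|\cdot\|_w$ is already a seminorm, the claim reduces to showing that $\|L\|_w = 0$ forces $\alpha = 0$ and $B = 0$.

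\emph{Step 1: forcing $\alpha = 0$.} This is where the hypothesis $\|A\|_w > 0$ enters. For each $v \in S_V$ and norming functional $v^* \in S_{V^*}$ of $v$, the Hahn--Banach theorem supplies a norm-preserving extension $x^* \in S_{X^*}$; the pair $(v, x^*)$ is then admissible for the numerical radius on $B(X)$, and since $Lv \in V$,
\[
x^*(Lv) \;=\; v^*(Lv) \;=\; \alpha\, v^*(Av).
\]
Taking the supremum over $(v, v^*)$ in the numerical-radius pairing of $V$ yields $\|L\|_w \geq |\alpha|\,\|A\|_w$, which forces $\alpha = 0$.

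\emph{Step 2: forcing $B = 0$.} Now $L = B$ with $B|_V = 0$ and $\|B\|_w = 0$. Assume for contradiction $B \neq 0$, and pick $u \in X$ with $y := Bu \neq 0$ (then $u \notin V$). Consider the $2$-dimensional subspace $W := \mathrm{span}(u, y) \subset X$ with the restricted norm; since $u \notin V$ and $y \in V$, $W \cap V = \mathrm{span}(y)$, and $V \cap S_W = \{\pm \hat y\}$ where $\hat y := y/\|y\|$. In the finite-dimensional Banach space $W$, Gateaux-differentiable (smooth) points are dense in $S_W$ while $V \cap S_W$ is just two points; hence in every neighborhood of $\hat y$ there is a smooth $w \in S_W$ with $w \notin V$. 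Let $w^* \in S_{W^*}$ be the unique support functional at such a $w$. Using $w^*(w) = 1$,
\[
w^*(y) \;=\; \|y\|\, w^*(\hat y) \;=\; \|y\|\,\bigl(1 + w^*(\hat y - w)\bigr),
\]
and $|w^*(\hat y - w)| \leq \|w - \hat y\|$ can be made arbitrarily small, so $w^*(y) \neq 0$. Writing $w = \beta u + \gamma y$, the condition $w \notin V$ forces $\beta \neq 0$. Because $B|_V = 0$, $Bw = \beta y \in W$, so $w^*(Bw) = \beta\, w^*(y) \neq 0$. Extending $w^*$ by Hahn--Banach to $x^* \in S_{X^*}$ preserves both the norm and the values on $W$; the pair $(w, x^*)$ is admissible for $\|\cdot\|_w$ on $B(X)$, giving $\|B\|_w \geq |x^*(Bw)| > 0$ and contradicting $\|B\|_w = 0$.

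\emph{Main obstacle.} The decisive step is passing from the vanishing of the seminorm on all of $X$ down to a concrete admissible pair that detects $Bw \neq 0$. The key is to work inside the finite-dimensional subspace $W = \mathrm{span}(u, y)$, where density of smooth points on $S_W$ combined with the finiteness of $V \cap S_W$ guarantees a suitable $w$; one then Hahn--Banach-extends the unique support functional of $w$ back to $X$, using that $Bw$ lies in $W$ so the pairing value is unchanged by the extension.
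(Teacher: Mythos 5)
Your proof is correct, and while your Step 1 coincides with the paper's Case 1 (Hahn--Banach extension of a functional $v^*$ norming $v$ with $|v^*(Av)|>0$, evaluated at the admissible pair $(v,x^*)$), your Step 2 takes a genuinely different route from the paper's Case 2. The paper exploits the finite rank of $L\in B_V(X,V)$: it writes $L=\sum_{i=1}^{k}f_i(\cdot)v_i$ with linearly independent $f_i$ vanishing on $V$, intersects kernels to produce an $x$ with $Lx=f_1(x)v_1\neq 0$, and then requires $0\notin\mathcal{P}_{\mathrm{span}[v_1]}(x)$, which forces every norming functional of $x$ to be nonzero on $v_1$. You avoid the finite-rank representation altogether: inside the two-dimensional section $W=\mathrm{span}(u,Bu)$ you perturb $\hat y$ to a unit vector $w\notin V$ with $\|w-\hat y\|$ small, note that $Bw=\beta y$ with $\beta\neq 0$, and use the estimate $|w^*(\hat y-w)|\le\|w-\hat y\|$ to see that the support functional of $w$ cannot annihilate $y$; Hahn--Banach extension then produces the admissible pair. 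Two observations on what each approach buys. First, your argument nowhere uses $\dim V<\infty$, so it actually proves the statement for an arbitrary subspace $V$, which is strictly more general than the paper's proof; moreover, your explicit perturbation supplies what the paper only posits without justification, namely the existence of a suitable $x$ with $0\notin\mathcal{P}_{\mathrm{span}[v_1]}(x)$ --- indeed your $w$ satisfies $\mathrm{dist}(w,\mathrm{span}[y])\le\|w-\hat y\|<1=\|w\|$, which is exactly that condition. Second, your appeal to density of smooth points (Mazur's theorem) is superfluous: the inequality $|w^*(\hat y-w)|\le\|w-\hat y\|$ holds for \emph{every} support functional at $w$, so any $w\in S_W\setminus V$ with $\|w-\hat y\|<1$ (for instance a normalization of $\hat y+\epsilon u$ for small $\epsilon>0$) works, and uniqueness of $w^*$ plays no role in your argument.
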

\begin{proof}
Let $L \in Z_A \setminus \{0\}$; we want to show $\| L \|_w > 0$.  Since $L \in Z_A$, then $L= \alpha A_0 + L_1$ where $\alpha \in \mathbb{R}$ and $L_1 \in B_V (X,V)$.\\
Case 1. Assume $\alpha\neq 0$ :\\
From our assumption $\| A \| _w > 0$, we know that for some  $v \in S(V)$ and $v^* \in S(V^*)$ with $v^*(v)=1$ we have $ | v^* A v | > 0$. Let $x^* \in S_{X^*}$ be the Hahn-Banach extension of $v^*$  to $X$, then
$$
x^* L v  = \alpha x^* A_0 v + x^* (L_1v).
$$
Since $L_1 v =0$ and $A_0|_V =A,$ $x^* L v = \alpha  x^* A v $ with $\alpha\neq 0$ and  $ | v^* A v | > 0$ and therefore $\| L \|_w > 0$.\\
 Case 2. Assume  $\alpha =0$ :\\
 Let $L \in B_V (X,V)\setminus \{0\}$ and set $L = \displaystyle \sum_{i=1}^{k} f_i(\cdot) v_i$ where $v_1,v_2, \cdots, v_k \in V\setminus \{0\} $ and
$f_1,f_2,\cdot,f_k\in X^*$
are such that
 \begin{itemize}
 \item $f_i |_V = 0$ for $i= 1, 2, \cdots k$
 \item $\{f_i\}_{i=1}^{k}$ is a linearly independent set.
 \end{itemize}
Let
$$
X_1= \displaystyle \bigcap _{i=1}^{k} \ker(f_i) \,\,\,\mbox{and} \,\,\, X_2 = \displaystyle \bigcap _{i=2}^{k} \ker(f_i).
$$
(We put $ X_2 = X$ if $k=1$).

Since $\{f_i\}_{i=1}^{k}$ is  linearly
independent, we know $V \subset X_1 \varsubsetneq X_2$. Fix $x \in X_2 \setminus X_1$ such that $ 0 \notin \mathcal{P}_{V_1}(x)$ where $V_1 =span[v_1]$. By $\mathcal{P}_{V_1}(x)$ we mean the set of  best approximation to $x$ from $V_1$. Without loss of generality assume $\| x \| = 1 $.

 Then by the Hahn-Banach Theorem  for any $x^* \in S(X^*)$ with $x^*(x)=1$, we have
 $x^*(v_1) \neq 0$ and hence $$ x^*Lx = x^* (\displaystyle \sum_{i=1}^{k} f_i(x) v_i) = x^* (f_1(x))v_1= f_1(x) x^*(v_1) \neq 0 $$ giving again $\| L \|_w > 0$.
\end{proof}
\begin{rem}
\label{rem3}
Note that $ \|Id_V \|_w=1.$ Hence  $\| . \|_w $ is actually a norm in restriction to $Z_{Id_V}.$
\end{rem}
In  \cite{vpo} it was shown that for any three dimensional real Banach space $X$ and any of its two dimensional subspace $V$ if the infimum with respect to the  operator norm
over $\mathcal{P}(X,V)$ is greater than one, then there exists the unique projection of minimal operator norm. Later in \cite{LE0} (see also \cite{LED})  this result was generalized as follows:
\\ Let $X$ be a three dimensional real Banach space and $V$ a two dimensional subspace of $X$. Suppose  $A \in B(V)$ is a fixed operator. Set
$$
\mathcal{P}_{A}(X,Y)= \{P\in B(X,Y) : \,\,\, P\mid_{Y}=A\,\,\}
$$
and  assume $\parallel P_0\parallel> \parallel A \parallel$, where $P_0 \in \mathcal{P}_{A} (X,Y)$ is an extension of minimal operator norm. Then $P_0$ is a SUBA minimal extension with respect to the operator norm.\\ In other words  for all $P \in \mathcal {P} _{A} (X,Y)$ one has
$$\| P \|\geq \| P_0 \| +\,r \,\| P-P_0\| $$
\begin{defi}
We say an operator $0$ is a SUBA to $A_o$ with respect to numerical radius in $B(X)$ if $A_0|_V = A $  and there exists $r > 0$ such that
$$
\| B \|_w \geq \| A_0 \|_w +\,r \,\| B-A_0\|_w
$$
for any $B \in B(X,V)$ with $B|_V =A.$
\end{defi}
A natural extension of the above result to $\| \cdot \|_{w}$ is as follows:
\begin{thm}
\label{application}
Assume that $X$ is a three dimensional real Banach space and let $V$ be a two dimensional subspace of $X$, and that $A \in B(V)$ with $\| A \|_w >0.$ Let
$$
\lambda_{w}^{A} = \lambda_{w}^{A}(X,V) = \inf \{ \| A_{0} \|_w : \,\,\, A_0 \in B(X,V)\, \,\,\,\,\, A_0|_V = A \} > \|A \|,
$$where $\|A\|$ denotes the operator norm.
Then there exists exactly one $A_0 \in B(X,V)$ such that $A_0|_V = A$ and $$\lambda_{w}^{A} = \| A_0 \|_w .$$ Moreover, $0$ is a SUBA to $A_o$ with respect to numerical radius in  $B_{V} (X,V)$.
\end{thm}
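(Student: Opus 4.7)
Fix any extension $B_0 \in B(X,V)$ of $A$; every extension has the form $B_0 + L$ with $L \in B_V(X,V)$, and $\dim B_V(X,V) = \dim V \cdot (\dim X - \dim V) = 2$. By Lemma \ref{lem1} (Case~2), $\|\cdot\|_w$ is a norm on $B_V(X,V)$, and the triangle inequality $\|B_0 + L\|_w \geq \|L\|_w - \|B_0\|_w$ shows that the continuous map $L \mapsto \|B_0 + L\|_w$ is coercive on the finite-dimensional normed space $(B_V(X,V),\|\cdot\|_w)$, hence attains its infimum at some $L^*$. Set $A_0 := B_0 + L^*$, so $\|A_0\|_w = \lambda_w^A$.

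\textbf{Reduction to SUBA and setup.} It suffices to prove that $0$ is a SUBA to $A_0$ in $B_V(X,V)$ with some $r > 0$: uniqueness of the minimal extension then follows at once, since any other extension $\widetilde A_0$ with $\|\widetilde A_0\|_w = \lambda_w^A$ would give $L := \widetilde A_0 - A_0 \in B_V(X,V)$ with $\|A_0 + L\|_w = \|A_0\|_w$, forcing $L = 0$ via the SUBA inequality. To prove SUBA, I apply Theorem \ref{stronguni} with target $A_0$, candidate $0$, and linear subspace $B_V(X,V) \subset \mathcal{W}(X)$; combined with the isometric representation of $\mathcal{W}(X)$ inside $C(Z_\mathbb{R})$ used in the proof of Theorem \ref{characterization}, the SUBA criterion becomes the existence of $r > 0$ such that for every $L \in B_V(X,V)$ some $(x^*,x) \in W_{A_0}$ satisfies $x^*(Lx) \leq -r\|L\|_w$. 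Define the continuous, $1$-homogeneous function
$$ \phi(L) := \min\{x^*(Lx) : (x^*,x) \in W_{A_0}\}, $$
where the minimum is attained since $W_{A_0}$ is closed in the compact set $Z_\mathbb{R}$. Theorem \ref{characterization} applied to $A_0$ yields $\phi(L) \leq 0$ for every $L \in B_V(X,V)$. By continuity and compactness of the unit circle $S_w \cap B_V(X,V)$ in the two-dimensional normed space $(B_V(X,V),\|\cdot\|_w)$, SUBA is equivalent to the strict inequality $\phi(L) < 0$ for every $L$ of unit $\|\cdot\|_w$-norm, in which case $r := -\max\{\phi(L) : \|L\|_w = 1\}$ serves as the strong uniqueness constant.

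\textbf{The main obstacle: strict negativity of $\phi$ on the sphere.} Suppose toward contradiction that $\phi(L_0) = 0$ for some $L_0 \in B_V(X,V)$ with $\|L_0\|_w = 1$, i.e.\ $x^*(L_0 x) \geq 0$ for every $(x^*, x) \in W_{A_0}$, with equality attained at some pair. First, the hypothesis $\lambda_w^A > \|A\|$ forces $x \notin V$ for every $(x^*, x) \in W_{A_0}$: indeed, if $x \in V$ with $\|x\|=1$ and $\|x^*\|\le 1$, then $|x^*(A_0 x)| = |x^*(Ax)| \leq \|A\| < \lambda_w^A$. Fix $e \in X \setminus V$ so that $X = V \oplus \mathbb{R} e$, and identify $B_V(X,V) \cong V$ via $L_w(v + te) := tw$. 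Writing $L_0 = L_{w_0}$ and $x = v(x) + t(x) e$ with $t(x) \neq 0$, the contradiction hypothesis reads $t(x)\, x^*(w_0) \geq 0$ for all $(x^*, x) \in W_{A_0}$. The plan is then to perturb $A_0$ to $A_\varepsilon := A_0 - \varepsilon L_0$ (possibly combined with a compensating direction in $B_V(X,V)$) and show $\|A_\varepsilon\|_w < \lambda_w^A$ for small $\varepsilon > 0$, contradicting the minimality of $\|A_0\|_w$. At pairs with $t(x) x^*(w_0) > 0$ this is immediate from the identity $x^*(A_\varepsilon x) = \lambda_w^A - \varepsilon\, t(x) x^*(w_0)$; the \emph{crux} is handling the degenerate pairs where $t(x) x^*(w_0) = 0$, together with nearby pairs in $Z_\mathbb{R}$ that might become extremal for $A_\varepsilon$. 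Here one restricts to the extreme set $E_{A_0} := W_{A_0} \cap (\mathrm{ext}(S_{X^*}) \times \mathrm{ext}(S_X))$ via Remark \ref{important1} and exploits the rigidity of the three-dimensional setting (so that $\mathrm{ext}(S_X)$ and $\mathrm{ext}(S_{X^*})$ have very limited topological complexity) together with $\dim V = 2$ and $\lambda_w^A > \|A\|$ to rule out persistent new extremal pairs for $A_\varepsilon$, following the operator-norm template of \cite{LE0}, \cite{LED}.
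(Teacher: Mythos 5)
Your setup is sound and, up to that point, matches the paper's own framework: existence of a minimal extension by finite-dimensionality and Lemma \ref{lem1}, the reduction of uniqueness to the SUBA statement, and the reformulation of SUBA (via Theorem \ref{stronguni} and compactness of $W_{A_0}$) as strict negativity of $\phi(L)=\min\{x^*(Lx):(x^*,x)\in W_{A_0}\}$ on the unit sphere of $B_V(X,V)$, with $r=-\max\{\phi(L):\|L\|_w=1\}$. The problem is the crux itself: you do not prove that $\phi(L_0)=0$ is impossible, and the route you sketch for it cannot be made to work. You propose to contradict minimality by showing $\|A_0-\varepsilon L_0\|_w<\lambda_w^A$, possibly after adding a compensating direction from $B_V(X,V)$. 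But every operator of the form $A_0-\varepsilon L_0+M$ with $M\in B_V(X,V)$ is still an extension of $A$, so by the very definition of $\lambda_w^A$ as an infimum over all extensions its norm is $\geq\lambda_w^A$; no perturbation inside $B_V(X,V)$ can ever beat $\lambda_w^A$. More to the point, the failure of strict negativity simply does not contradict minimality: the paper's Remark \ref{complex} exhibits, in complex $l_\infty^{(3)}$, a \emph{minimal} projection $P$ and a nonzero $L\in B_V(X,V)$ with $re((e_j\otimes x^j)L)=0$ for every $(e_j,x^j)\in W_P$, i.e.\ $\phi(L)=0$ while $0$ remains a best approximation; Remark \ref{dim4} gives the same phenomenon in real dimension $\geq 4$. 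So the dichotomy ``degenerate direction $\Rightarrow$ strictly better extension'' is false in general, and the phrases ``rigidity of the three-dimensional setting'' and ``following the operator-norm template'' are precisely where the entire content of the theorem is hiding, not a proof of it.

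What the paper does instead, and what you would need to reproduce, is a convex-geometry argument. Since $0$ is a best approximation to $A_0$ in $B_V(X,V)$, Theorem \ref{characterization} together with the separation theorem gives $0\in conv\{(x^*\otimes x)|_{B_V(X,V)}:(x^*,x)\in W_{A_0}\}$. By the Carath\'eodory theorem (here $\dim B_V(X,V)=2$) one can write $0=\sum_{j=1}^{k_o}a_j(x_j^*\otimes x_j)|_{B_V(X,V)}$ with $a_j>0$ and $k_o\leq 3$ minimal. The hypothesis $\lambda_w^A>\|A\|$ enters exactly here: it is used (through test operators of the form $f(\cdot)A_0x$ and $f(\cdot)A_0y$, where $V=\ker f$, exploiting $f(x)\neq 0$ for $(x^*,x)\in W_{A_0}$) to rule out $k_o=1$ and $k_o=2$, so that $k_o=3$. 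Minimality of $k_o$ then forces any two of the three restricted functionals to be linearly independent, hence they span the dual of the two-dimensional space $B_V(X,V)$; consequently, for every $L\neq 0$ the three values $x_j^*(Lx_j)$ cannot all vanish, and since they have a vanishing positive combination, at least one is strictly negative. That is exactly the strict negativity of $\phi$ you assumed on the sphere, and compactness finishes the argument as you indicated. Your observation that $x\notin V$ for all $(x^*,x)\in W_{A_0}$ is correct and is indeed an ingredient of that case analysis, but by itself it falls far short of the conclusion.
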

\begin{proof}
Since $\|A\|_w >0,$ by Lemma(\ref{lem1}) $\| \cdot \|_w$ is a norm on $Z_A.$
Since $X$ is finite-dimensional, any operator $L \in Z_A$ posseses a best approximation in $B_V(X,V)$ with respect to the $\| \cdot \|_w.$  Hence there exists $A_o \in \mathcal{P}_A(X,V)$ such that $ \|A_o\|_w = \lambda_w^A.$
Let $W_{A_o}$ be defined by (\ref{set}). Set for any $(x^*, x) \in X^* \times X$ and $ L \in B(X)$
$$
(x^*\otimes x)(L) = x^*(Lx).
$$
Note that  $ x^* \otimes x$ is a linear, continuous functional on $ B(X)$  for any $ (x^*,x) \in X^* \times X.$
Let
$$
C = \{ x^*\otimes x : (x^*,x) \in W_{A_o}\}.
$$
First we show that $ 0 \in conv(C|_{B_V(X,V)}).$ Assume that this is not true. Since $X$ is finite-dimensional and $C$ is a compact set, by the Carath\'eodory Theorem
(see \cite{ewc})  $conv(C|_{B_V(X,V)})$ is also a compact set.  Since $ 0 \notin conv(C|_{B_V(X,V)}),$ by the Separation Theorem there exists $ L \in B_V(X,V)$ such that
$$
(x^* \otimes x)(L)= x^*(Lx) > 0
$$
for any $(x^*,x) \in W_{A_o}.$ By Theorem (\ref{characterization}) applied to $ A_o$ and $ B_V(X,V),$ it follows that $A_o$ is not a minimal extension of $A$ which is a contradiction.
Consequently,
\begin{equation}
\label{conv}
0 = \sum_{j=1}^k a_j(x^*_j \otimes x_j)|_{B_V(X,V)},
\end{equation}
where $ a_j > 0$ and $ \sum_{j=1}^k a_j=1.$ Let
$
k_o = \min\{ k: k \hbox{ satisfies }\ref{conv} \}.
$
Note that \\$dim(B_V(X,V))=2,$ since $dim(X)=3,$ and $dim(V)=2.$ Hence by the Carath\'eodory Theorem, (see \cite{ewc}),we conclude that $k_o \leq 3.$

Now we show that $k_o=3.$ Assume this is not true. If $ k_o =1, $ then \\$ (x^* \otimes x)|_{B_V(X,V)} = 0 $ for some $ (x^*, x) \in W_{A_o}.$
Fix $ f \in X^* \setminus \{0\}$ satisfying $ V = ker(f).$ Since $$ \|A_o\|_w > \|A\| \geq \|A\|_w $$ it follows that
$ f(x) \neq 0.$ Take $ L \in B_V(X,V)$ given by $ Lz = f(z)A_ox.$ Note that
$$
x^*(Lx) = f(x)x^*(A_ox) = f(x) \|A_o\|_w \neq 0,
$$
which leads to a contradiction.
Now assume that $k_o=2.$ Then
\begin{equation}
\label{two}
0 = a_1(x^* \otimes x)|_{B_V(X,V)} + a_2 (y^* \otimes y)|_{B_V(X,V)}
\end{equation}
where $a_1 > 0, a_2 >0 $ and $a_1 + a_2 =1.$
First we show that $x$ and $y$ are linearly independent. If not, since $ \| x\| = \|y\|=1,$ we have $x=y$ or $x=-y.$
By (\ref{two}) taking \\$ L = f(\cdot)A_ox$ we get
$$
0= a_1f(x)\|A_o\|_w + (1-a_1)f(x) \|A_o\|_w
$$
which gives $ f(x) = 0.$ Hence $ \|A_o\|_w = \|A\|_w \leq \|A\|$ which is a contradiction.

Now we show that $ x^*|_V = b y*|_V$ for some $ b \neq 0.$ Note that, since $ x^*(A_ox)= y^*(A_oy) = \|A\|_o,$ we have  $ x^*|_V \neq 0$ and $ y^*|_V \neq 0.$
If $x^*|_V$ and $ y*|_V$ were linearly independent, then we could find $v_1\in V$ such that $x^*(v_1)=1$ and $ y^*(v_1)=0.$
Set $ S= f(\cdot)v_1.$
By (\ref{two}) applied to $S$ we get
$$
0 = a_1f(x)x^*(v_1)= a_1f(x).
$$
Since $f(x)\neq 0,$ it follows that $a_1=0$ is a contradiction.
By (\ref{two}) applied to $ L = f(\cdot)A_oy$ we get
$$
0=a_1b f(x)y^*(A_oy) +(1-a_1)f(y)y^*(A_oy) = \|A_o\|_w (f(a_1bx +(1-a_1)y)
$$
and consequently $ f(a_1bx +(1-a_1)y)=0.$ Since $f(x) \neq 0 $ and $ f(y)\neq 0,$ we can find exactly one $ c_1 >0 $
such that $ f(c_1x + (1-c_1)y) = 0$ if $ f(x)f(y) <0 $ or \\ $ f(c_1(-x) + (1-c_1)y) = 0$ if $ f(x)f(y) >0.$

 Since $x$ and $ y$ are linearly independent we get that,
$ b=1$ if $ f(x)f(y) >0$ and $ b=-1$ if $ f(x)f(y) <0.$
Consequently,

$$
y^*(A_o(c_1x +(1-c_1)y)=\|A_o\|_w
$$
if $ f(x)f(y) >0$ and
$$
y^*(A_o(c_1(-x) +(1-c_1)y)=\|A_o\|_w
$$
if $f(x)f(y) < 0.$
But this leads to $ \|A_o\|_w \leq \|A\|$ which is a contradiction.
Hence we have proved that $ k_o =3.$ By (\ref{conv}) we get
\begin{equation}
\label{conv3}
0 = \sum_{j=1}^3 a_j(x^*_j \otimes x_j)|_{B_V(X,V)},
\end{equation}
where $ a_i > 0,$ for $i=1,2,3$ and $ a_1 +a_2+a_3=1.$

Now we show that for any $ i_1,i_2 \in \{ 1,2,3\}, $ $ i_1 \neq i_2,$ it follows that\\ $ g_1 = (x^*_{i_1}\otimes x_{i_1})|_{B_V(X,V)}$ and $ g_2=(x^*_{i_2}\otimes x_{i_2})|_{B_V(X,V)} $ are linearly independent. Without loss of generality we can assume that $ i_1 = 1 $ and $ i_2 =2.$
If not, there exists $ a, b \in \mathbb{R}$ such that $ |a| + |b|> 0 $ and
\begin{equation}
\label{conv4}
ag_1 + bg_2 =0.
\end{equation}
Since $k_o =3,$ we have  $ a \neq 0,$ $b \neq 0$ and $ ab < 0.$
Without loss of generality, we can assume that $ a >0.$ Multiplying (\ref{conv4}) by $ -a_2/b$ and adding it to (\ref{conv3}) we get
$$
((a_1+a(-a_2/b)(x^*_1\otimes x_1) + a_3(x^*_3\otimes x_3))|_{B_V(X,V)} = 0.
$$
Since $ -a_2/b >0,$ and $ k_o=3,$ we get a contradiction, so $g_1$ and $g_2$ are linearly independent.

Now take any $ L \in B_V(X,V)$, define with $ \| L \|_w=1.$ Since $g_1$ and $g_2$ are linearly independent and by (\ref{conv3}) there exists $ i \in \{1,2,3\}$ such that
$ (x^*_i \otimes x_i)(L) = x^*_i(Lx_i) < 0.$ For $ L \in B_V(X,V), $ with $ \| L \|_w=1$, define
$$
g(L) = min \{ (x^*_i \otimes x_i)L :i=1,2,3\}.
$$
It is clear that $g$ is a continuous function on $ S_{B_V(X,V)}$ and $ g(L) < 0 $ for any \\$ L \in S_{B_V(X,V)}.$ Since $X$ is finite-dimensional, $S_{B_V(X,V)}$ is a compact set and
$$
s = \sup \{ g(L) : L \in S_{B_V(X,V)} \} < 0.
$$
Now take any $ L \in B_V(X,V) \setminus \{ 0 \}.$ Then there exists $ i \in \{ 1,2,3\}$ such that
$$
g(L/\|L\|_w) =(x^*_i \otimes x_i)(L/\|L\|_w) \leq s.
$$
 Theorem (\ref{stronguni}) implies that  $ 0 $ is a SUBA to $ A_o, $ with $ r =-s$, and the proof is complete.
\end{proof}
Notice if we take $ A = id_V$ then $ \|A\|_w = \|A\|_o =1.$ In this situation Theorem (\ref{application}) takes the following form.
\begin{thm}
\label{projections}
Assume that $X$ is a three dimensional real Banach space and let $V$ be its two dimensional subspace. Assume that
$$
\lambda^{id_V}_w > 1.
$$
Then there exists exactly one $P_0 \in \mathcal{P}(X,V)$ of minimal norm. Moreover $0$ is a SUBA to $P_o$ with respect to  the numerical radius in  $B_{V} (X,V)$. In particular $P_o$ is the only  minimal projection with respect to the numerical radius.
\end{thm}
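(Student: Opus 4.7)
The plan is to deduce Theorem \ref{projections} as a direct specialization of Theorem \ref{application}, using $A = \mathrm{id}_V$. The first step is to identify the domain of optimization: a projection $P \in \mathcal{P}(X,V)$ is by definition an operator in $B(X,V)$ whose restriction to $V$ is $\mathrm{id}_V$, so
\[
\mathcal{P}(X,V) = \{A_0 \in B(X,V) : A_0|_V = \mathrm{id}_V\},
\]
which coincides with $\mathcal{P}_A(X,V)$ of Theorem \ref{application} for the choice $A = \mathrm{id}_V$. In particular $\lambda_w^{\mathrm{id}_V}$ as defined just before Theorem \ref{projections} is exactly the quantity $\lambda_w^A(X,V)$ appearing in Theorem \ref{application}.

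Next I would verify that the hypotheses of Theorem \ref{application} are satisfied. By Remark \ref{rem3} one has $\|\mathrm{id}_V\|_w = 1 > 0$, so the assumption $\|A\|_w > 0$ holds. Also, the operator norm of $\mathrm{id}_V$ is $1$, so the standing inequality $\lambda_w^A > \|A\|$ required in Theorem \ref{application} becomes precisely $\lambda_w^{\mathrm{id}_V} > 1$, which is the hypothesis of Theorem \ref{projections}. Thus every assumption of Theorem \ref{application} is in force.

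I would then simply invoke Theorem \ref{application}: it yields a unique $P_0 \in B(X,V)$ with $P_0|_V = \mathrm{id}_V$ realizing the infimum $\lambda_w^{\mathrm{id}_V}$, and asserts that $0$ is a SUBA to $P_0$ in $B_V(X,V)$ with respect to $\|\cdot\|_w$. Translating back through the identification of the previous paragraph, $P_0$ is the unique minimal projection in $\mathcal{P}(X,V)$ with respect to the numerical radius, and the strong unicity statement follows verbatim.

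There is essentially no obstacle beyond bookkeeping; the only point that deserves a moment of care is the translation between ``extensions of $\mathrm{id}_V$'' and ``projections onto $V$'', together with the observation that the SUBA conclusion is phrased in terms of perturbations in $B_V(X,V)$, which corresponds exactly to the affine parametrization $P = P_0 + L$ with $L \in B_V(X,V)$ inside $\mathcal{P}(X,V)$. Once this identification is made explicit, Theorem \ref{projections} follows immediately from Theorem \ref{application}.
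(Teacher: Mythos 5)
Your proposal is correct and takes essentially the same route as the paper: the paper itself obtains Theorem \ref{projections} purely as the specialization $A = \mathrm{id}_V$ of Theorem \ref{application}, noting (as you do) that $\|\mathrm{id}_V\|_w = \|\mathrm{id}_V\| = 1$, so the hypothesis $\lambda_w^{A} > \|A\|$ becomes $\lambda_w^{\mathrm{id}_V} > 1$ and $\mathcal{P}_{\mathrm{id}_V}(X,V) = \mathcal{P}(X,V)$. Your bookkeeping of the identification between extensions of $\mathrm{id}_V$ and projections onto $V$ is exactly the content of the paper's (unwritten) argument.
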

\begin{rem}
\label{normone} In Theorem (\ref{application}) the assumption that $ \|A\| < \lambda_A(X,V)$ is essential.
 \end{rem}

 Indeed, let $ X= l_{\infty}^{(3)},$ $ V =\{x \in X: x_1 +x_2=0\}$ and
$ A= id_V.$ Define
$$
P_1x = x -(x_1+x_2)(1,0,0)
$$
and
$$
P_2x = x -(x_1+x_2)(0,1,0).
$$
It is clear that
$$
\|P_1\|_o =\|P_1\|_w = \|P_2\|_o = \|P_2\|_w=1
$$
and $ P_1 \neq P_2.$ Hence there is no strongly unique minimal projection in this case.
\begin{rem}
\label{dim4}
Theorem (\ref{stronguni}) cannot be generalized for real spaces $X$ of dimension $ n \geq 4.$
\end{rem}
 Indeed let $ X= l_{\infty}^{(n)},$ and let $ V = ker(f),$
where $ f=(0,f_2,...,f_n) \in l_1^{(n)}$ satisfies $f_i>0$ for $i=2,...,n,$  $ \sum_{i=2}^n f_i=1$ and $ f_i <1/2 $ for $i=1,...,n.$ It is known
(see \cite{bl}, \cite{OL}) that in this case
$$
\lambda(X,V) = inf \{ \|P\|: P \in \mathcal{P}(X,V) \} = 1 + (\sum_{i=2}^n f_i/(1-2f_i))^{-1} >1.
$$
By \cite{aag-cbl}, $ \lambda(X,V) = \lambda_w^{id_V}(X,V).$
Define for $ i=2,...,n$ $ y_i= (\lambda(X,V)-1)(1-2f_i).$
Let $ y =(y_1,...,y_n)$ and $ z=(0,y_2,...,y_n).$ Consider mappings $P_1, P_2$ defined by
$$
P_1x = x-f(x)y
$$
and
$$
P_2x= x - f(x)z
$$
for $x \in l_{\infty}^{(n)}.$
It is easy to see that $ P_i \in \mathcal{P}(X,V),$ for $i=1,2,$ $ P_1 \neq P_2.$ By (\cite{OL} p. 104) $ \|P_i\|_o = \|P_i\|_w = \lambda(X,V)= \lambda_w^{id_V}$
for $i=1,2.$
\begin{rem}
\label{complex}
Theorem (\ref{stronguni}) is not valid for complex three dimensional spaces.
\end{rem}

Let $ X = l_{\infty}^{(3)}$ (in the complex case) and let
$$
V = \{ z \in X : z_1+z_2+z_3=0\}.
$$
Let $ y = (1,1,1).$ We show that
$$
Pz = z - (\frac{z_1+z_2+z_3}{3})y
$$
is a minimal projection in $ \mathcal{P}(X,V)$ with respect to the numerical radius and that
$$
\|P\|_w = 4/3.
$$
Let $f= (1/3,1/3,1/3).$ It is easy to see that (compare with \cite{OL}, p.103)
$$
\|P\| = max\{|(Pz)_j)|,j=1,2,3, \|z\|_{\infty}=1\}
$$
$$
= max\{|1-f_jy_j|+y_j(1-f_j):j=1,2,3\}= 4/3.
$$
Note that for $j=1,2,3$
$$
(e_j \otimes x^j)P = (Px_j)_j=4/3,
$$
where $ x^1=(1,-1,-1),$ $ x^2=(-1,1,-1)$ and $ x^3=(-1,-1,1).$ Since $e_j(x^j)=1$ for $j=1,2,3,$ we have  $\|P\|_w = 4/3.$
Also it is easy to see that
$$
W_P = \{ (e_j,x^j):j=1,2,3\}.
$$
Notice that
$$
\sum_{j=1}^3(e_j \otimes x^j)|_{B_V(X,V)}=0
$$
By Theorem (\ref{characterization}) and Remark (\ref{important2}), it follows that $0$ is a best approximation to $P$ in $B_V(X,V)$ with respect to the numerical radius,
which means that $P$ is a minimal projection with respect to the numerical radius.

Now define $ z = i(1,1,-2)$ and let $L= f(\cdot)z.$ It is clear that $ L \in B_V(X,V).$ Note that for $j=1,2,3$
$$
re((e_j \otimes x^j)L)= re(f(x^j)z_j) = f(x^j)re(z_j)=0.
$$
By Theorem (\ref{stronguni}), $0$ is not a SUBA to $P$ in $B_V(X,V),$ which proves our claim.

\noindent
\mbox{~~~~~~~}Asuman G\"{u}ven AKSOY\\
\mbox{~~~~~~~}Claremont McKenna College\\
\mbox{~~~~~~~}Department of Mathematics\\
\mbox{~~~~~~~}Claremont, CA  91711, USA \\
\mbox{~~~~~~~}E-mail aaksoy@cmc.edu \\ \\
\noindent
\mbox{~~~~~~~}Grzegorz LEWICKI\\
\mbox{~~~~~~~}Jagiellonian University\\
\mbox{~~~~~~~}Department of Mathematics\\
\mbox{~~~~~~~}\L ojasiewicza 6, 30-348, Krak\'ow, Poland\\
\mbox{~~~~~~~}E-mail: Grzegorz.Lewicki@im.uj.edu.pl\\\\

\end{document}